\crefname{equation}{}{}
\newtheorem{theorem}{Theorem}[section]
\newtheorem{proposition}[theorem]{Proposition}
\newtheorem{lemma}[theorem]{Lemma}
\newtheorem{corollary}[theorem]{Corollary}
\newtheorem*{question*}{Question} \Crefname{question}{Question}{Questions}
\theoremstyle{definition}
\newtheorem{question}[theorem]{Question}
\theoremstyle{remark}
\newcommand{\I}{\mathcal{I}}
\newcommand{\D}{\mathcal{D}}
\title{The Relation Between Composability and Splittability of Permutation Classes}
\author[Zhang]{Rachel Yun Zhang}
\address{Department of Mathematics, MIT, Cambridge, MA 02139, USA}
\email{rachelyz44@gmail.com}
\date{July 2019}
\begin{document}

\maketitle

\begin{abstract}
    A permutation class $C$ is said to be \emph{splittable} if there exist two proper subclasses $A, B \subsetneq C$ such that any $\sigma \in C$ can be red-blue colored so that the red (respectively, blue) subsequence of $\sigma$ is order isomorphic to an element of $A$ (respectively, $B$). The class $C$ is said to be \emph{composable} if there exists some number of proper subclasses $A_1, \dots, A_k \subsetneq C$ such that any $\sigma \in C$ can be written as $\alpha_1 \circ \dots \circ \alpha_k$ for some $\alpha_i \in A_i$. We answer a question of Karpilovskij by showing that there exists a composable permutation class that is not splittable. We also give a condition under which an infinite composable class must be splittable.

\end{abstract}

\section{Introduction}

The study of permutation classes, i.e. hereditary sets of permutations, is motivated naturally by the study of pattern avoidance. Many techniques have been applied to count the permutations of length $n$ that avoid a certain set of permutations. While the set of permutations that avoid a single permutation of length $3$ and those that avoid certain length $4$ permutations have structural characterizations, such methods were not applicable to counting the number of permutations that avoid $1324$. In 2012, Claesson, Jel\'inek, and Steinr\'imsson showed that any permutation $\pi$ avoiding $1324$ can be written as the merge of a permutation avoiding $132$ and another avoiding $213$. Using this result, they bounded above the number of $1324$-avoiding permutations of length $n$ by $16^n$. 

Consequently, the concept of \emph{splittability} was introduced in 2015 by Jel\'inek and Valtr \cite{splittability} to better understand when permutation classes can be written as the merge of two subclasses. Jel\'inek and Valtr's main focus is on principal classes, i.e. sets of all permutations avoiding a single permutation. Albert and Jel\'inek \cite{unsplittable-separable} continue the study of splittability in a more limited context, by characterizing the unsplittable subclasses of the separable permutations. 


We will be interested in the relationship between splittability and a different property of permutation classes known as \emph{composability}. The concept of the composability of permutation classes was introduced by Karpilovskij \cite{composability} in 2019 in order to relate the group structure of permutations with the hereditary structure of permutation classes. The composition of permutation classes had previously been studied in relation to sorting machines \cite{comp-sort, comp-enum}, but this was the first time permutation classes were related to the composition of subclasses.

In \cite{composability}, Karpilovskij finds instances of permutation classes that are composable and splittable, uncomposable and splittable, and uncomposable and unsplittable, but does not find one that is composable and unsplittable. He therefore asks if there is a such a class. In this paper, we answer this question in the affirmative. We also give a condition under which composability implies splittability: we show that infinite composable classes that avoid either an increasing or decreasing permutation must be splittable. 

An outline of this paper is as follows. In Section 2, we define the concepts of splittability and composability in more depth. In Section 3, we present our results on the relationship between composability and splittability. Finally, in Section 4, we state some open questions.

\section{Preliminaries}

\subsection{Permutation Classes}

A \emph{permutation} is a sequence $\pi$ of distinct numbers $\pi_1, \dots, \pi_n \in [n]$. In this case, $n$ is the \emph{length} of $\pi$, and we denote this by $|\pi| = n$. When writing permutations, we will omit commas: the permutation $1, 3, 2, 4$ is the same as $1324$. We will denote the increasing permutation of length $n$ by $\iota_n = 12\dots n$ and the decreasing permutation of length $n$ by $\delta_n = n(n-1)\dots 1)$.

Two sequences $\pi$ and $\pi'$ of length $n$ are \emph{order isomorphic} if for all $i, j \in [n]$, $\pi_i > \pi_j \Longleftrightarrow \pi'_i > \pi'_j$. We say that a permutation $\pi = \pi_1 \cdots \pi_n$ \emph{contains} a permutation $\sigma = \sigma_1 \cdots \sigma_m$ if there exist indices $1 \le i_1 < \dots < i_m \le n$ such that $\pi_{i_1} \pi_{i_2} \cdots \pi_{i_m}$ is order isomorphic to $\sigma$. If $\pi$ contains $\sigma$, we write $\sigma \le \pi$. If $\pi$ does not contain $\sigma$, we say that $\pi$ \emph{avoids} $\sigma$.

A set $C$ of permutations is \emph{hereditary} if for all $\pi \in C$ and $\sigma \le \pi$, the permutation $\sigma$ is also in $C$. We refer to a hereditary set of permutations as a \emph{permutation class}. Note that any permutation class is equal to the set of permutations avoiding a fixed set set of permutations. 

Some common permutation classes include $\I = \{ \iota_n : n \in \mathbb{N} \}$, the set of all increasing permutations, and $\D = \{ \delta_n : n \in \mathbb{N} \}$, the set of all decreasing permutations. It follows from the Erd\H{o}s-Szekeres theorem \cite{es} that any infinite permutation class must either contain $\I$ or $\D$. Some other classes we will use are $\I_m$, the class of all permutations that do not contain $\delta_{m+1}$, and $\D_m$, the class of all permutations that do not contain $\iota_{m+1}$.

\subsection{Direct Sums and Reversals}

One way to form permutations from others is the \emph{direct sum}: given permutations $\alpha$ of length $k$ and $\beta$ of length $\ell$, we define $\alpha \oplus \beta$ to be the permutation $\sigma$ of length $k + \ell$ such that $\sigma_i = \alpha_i$ for $i \in [k]$, and $\sigma_j = \beta_{j - k} + k$ if $j \in [k + \ell] \backslash [k]$. 

Given a permutation $\pi = \pi_1 \cdots \pi_n$, we let $\pi^r = \pi_n \cdots \pi_1$ denote its \emph{reversal}. Note for instance that $\iota_k^r = \delta_k$. 

\subsection{Merging, Splittability, and Atomicity}

A permutation $\sigma$ is a \emph{merge} of two permutations $\alpha$ and $\beta$ if one can color the elements of $\sigma$ red and blue such that the red subsequence is order isomorphic to $\alpha$ and the blue subsequence is order isomorphic to $\beta$. 

Following this definition, we can define the \emph{merge} of two permutation classes $A$ and $B$ to be the set of all pairwise merges:
\[
    A \odot B = \{ \sigma\ |\ \text{there exist $\alpha \in A, \beta \in B$ such that $\sigma$ is a merge of $\alpha$ and $\beta$} \}.
\]
For instance, the class $\I_m$ is equal to the merge of $m$ copies of $\I$.

We say that a permutation class $C$ is \emph{splittable} if there exist $k$ proper subclasses $A_1, \dots, A_k \subsetneq C$ such that $C \subseteq A_1 \odot A_2 \odot \dots \odot A_k$. Note that if we did not require $A_i$ to be proper subclasses, but instead allowed them also to be $C$, then $C \subseteq C \odot A_2 \odot \dots \odot A_k$ for any choices of subclasses $A_2, \dots, A_k$, which would make splittability a rather trivial condition. Hence, we require $A_i \not= C$. 

In fact, $C$ is splittable if and only if there exist \emph{two} subclasses $A, B \subsetneq C$ such that $C \subseteq A \odot B$. To see this, consider a split of $C$ into $A_1 \odot \dots \odot A_k$, and suppose that this split is \emph{irreducible}, that is, there doesn't exist a proper subset $A_{i_1}, \dots, A_{i_m}$ of the original split such that $C \subseteq A_{i_1} \odot \dots \odot A_{i_m}$. Then, $C \subseteq A_1 \odot (A_2 \odot \dots \odot A_k \cap C)$. Since $A_2 \odot \dots \odot A_k$ does not contain $C$ by the irreducibility assumption, the set $A_2 \odot \dots \odot A_k \cap C$ is a proper subclass of $C$, so $C$ is splittable into two subclasses.

Finally, we say that a class $C$ is \emph{atomic} if for any $\alpha, \beta \in C$, there exists some $\sigma \in C$ such that $\alpha \le \sigma$ and $\beta \le \sigma$. 

\subsection{Inflation and Unsplittability}

We first give a separate, equivalent condition for splittability.

\begin{proposition}[\cite{splittability}, Lemma 1.1] \label{prop:split-equiv}
    A permutation class $C$ is splittable if and only if there exist two elements $\pi, \pi' \in C$ such that for any $\sigma \in C$, there exists a red-blue coloring of $\sigma$ such that the red part avoids $\pi$ and the blue part avoids $\pi'$. Alternatively, $C$ is unsplittable if and only if for all $\pi, \pi' \in C$, there exists $\sigma \in C$ such that any red-blue coloring of $\sigma$ either has the red part contain $\pi$ or the blue part contain $\pi'$.
\end{proposition}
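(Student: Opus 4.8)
The plan is to prove the two stated characterizations directly, exploiting the hereditary structure of permutation classes, and to invoke the reduction established just above: $C$ is splittable precisely when $C \subseteq A \odot B$ for two proper subclasses $A, B \subsetneq C$. So it suffices to show this two-class splittability is equivalent to the existence of the coloring witnesses $\pi, \pi' \in C$.

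For the forward direction, suppose $C \subseteq A \odot B$ with $A, B \subsetneq C$ proper. Since $A$ is a proper subclass, I can pick some $\pi \in C \setminus A$; because $A$ is hereditary, no element of $A$ can contain $\pi$ (otherwise $\pi$ itself would lie in $A$), so every element of $A$ avoids $\pi$. Choosing $\pi' \in C \setminus B$ in the same way, every element of $B$ avoids $\pi'$. Now any $\sigma \in C$ is a merge of some $\alpha \in A$ and $\beta \in B$; the red-blue coloring realizing this merge has red part order isomorphic to $\alpha$ (hence avoiding $\pi$) and blue part order isomorphic to $\beta$ (hence avoiding $\pi'$), which is exactly the desired coloring.

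For the converse, suppose $\pi, \pi' \in C$ witness the coloring condition. I would set $A = C \cap \mathrm{Av}(\pi)$ and $B = C \cap \mathrm{Av}(\pi')$, where $\mathrm{Av}(\cdot)$ denotes the class of permutations avoiding a given pattern. These are intersections of hereditary sets, hence permutation classes, and they are \emph{proper} subclasses of $C$ because $\pi \in C \setminus A$ and $\pi' \in C \setminus B$ (a permutation always contains itself). Given $\sigma \in C$, the hypothesized coloring writes $\sigma$ as a merge of a red pattern $\alpha$ avoiding $\pi$ and a blue pattern $\beta$ avoiding $\pi'$; since $\alpha, \beta \le \sigma$ and $C$ is hereditary, both lie in $C$, so $\alpha \in A$ and $\beta \in B$, witnessing $\sigma \in A \odot B$. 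Thus $C \subseteq A \odot B$ and $C$ is splittable.

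The unsplittability reformulation is then just the logical contrapositive of the first equivalence, obtained by negating the quantifiers $\exists \pi, \pi'\, \forall \sigma$ into $\forall \pi, \pi'\, \exists \sigma$ and negating the coloring condition. The only real subtlety, which I would state explicitly, is the repeated use of heredity: it is what turns \emph{missing a single permutation} $\pi$ into \emph{avoiding $\pi$ everywhere} in the forward direction, and what guarantees that the extracted red and blue subsequences remain inside $C$ in the converse. Beyond bookkeeping these two applications correctly, I do not anticipate any substantial obstacle.
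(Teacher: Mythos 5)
Your proof is correct, and in fact the paper itself gives no proof of this proposition --- it is cited to Jel\'inek and Valtr (\cite{splittability}, Lemma~1.1) --- so there is nothing in the paper for your argument to diverge from; your two directions (a proper hereditary subclass $A \subsetneq C$ must avoid any chosen $\pi \in C \setminus A$, and conversely $A = C \cap \mathrm{Av}(\pi)$, $B = C \cap \mathrm{Av}(\pi')$ are proper subclasses witnessing the split) constitute the standard proof of that cited lemma. Your appeal to the two-class reduction stated just before the proposition is also legitimate, since the paper establishes it independently of this result.
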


\begin{corollary}[\cite{splittability}] \label{cor:atomicity}
    If $C$ is unsplittable, then it is atomic.
\end{corollary}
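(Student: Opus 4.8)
The plan is to prove the contrapositive: if $C$ is not atomic, then $C$ is splittable. So suppose $C$ fails atomicity, meaning there exist two permutations $\alpha, \beta \in C$ such that no $\sigma \in C$ contains both $\alpha$ and $\beta$. My goal is to produce a split of $C$ using \Cref{prop:split-equiv}, i.e.\ to exhibit two witnesses $\pi, \pi' \in C$ such that every $\sigma \in C$ admits a red-blue coloring where the red part avoids $\pi$ and the blue part avoids $\pi'$.

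The natural choice is to take $\pi = \alpha$ and $\pi' = \beta$ themselves. The key observation is this: take any $\sigma \in C$. Since $C$ is hereditary, every pattern of $\sigma$ lies in $C$, and by the non-atomicity hypothesis $\sigma$ cannot contain both $\alpha$ and $\beta$. Hence $\sigma$ must avoid at least one of $\alpha$ or $\beta$. If $\sigma$ avoids $\alpha$, color all of $\sigma$ red (and leave the blue part empty); then the red part is $\sigma$, which avoids $\pi = \alpha$, while the blue part trivially avoids $\pi' = \beta$. Symmetrically, if $\sigma$ avoids $\beta$, color all of $\sigma$ blue. Either way we obtain a valid red-blue coloring of the required form, so by \Cref{prop:split-equiv} the class $C$ is splittable.

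To complete the logical structure I would also verify the side condition of \Cref{prop:split-equiv}, namely that $\alpha, \beta$ genuinely belong to $C$ (which they do by assumption) and that the monochromatic colorings are legitimate — the empty subsequence is order isomorphic to the empty permutation, which is contained in every class and avoids every nonempty pattern, so the trivial avoidance claims hold. This gives the contrapositive, and negating both sides yields the stated implication: unsplittable $\Rightarrow$ atomic.

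The main thing to watch, rather than a deep obstacle, is making sure the quantifiers in \Cref{prop:split-equiv} line up correctly: splittability requires a \emph{single} pair $\pi, \pi'$ that works \emph{uniformly} for all $\sigma \in C$, and the argument above indeed fixes $\pi = \alpha$ and $\pi' = \beta$ once and for all before quantifying over $\sigma$. The only subtlety is confirming that "avoids $\alpha$ or avoids $\beta$" really follows from non-atomicity — this uses that if $\sigma$ contained both, then $\sigma$ itself would be the common extension witnessing atomicity for the pair $(\alpha,\beta)$, contradicting the choice of $\alpha,\beta$. Since this step is immediate from the definitions, the proof is short and requires no case analysis beyond the dichotomy described.
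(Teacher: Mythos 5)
Your proof is correct, and it is essentially the intended argument: the paper states this corollary without proof (citing \cite{splittability}), and the standard derivation is exactly your contrapositive via Proposition~\ref{prop:split-equiv}, taking the incomparable pair $\alpha, \beta$ as the witnesses and using monochromatic colorings. You also correctly handle the one genuine subtlety, namely that the witnesses are fixed before quantifying over $\sigma$ and that each $\sigma \in C$ must avoid at least one of $\alpha, \beta$ since $\sigma$ itself would otherwise be a common extension.
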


Given a permutation $\pi$ of length $n$ and $n$ permutations $\sigma_1, \dots, \sigma_n$, the \emph{inflation} of $\pi$ by $\sigma_1, \dots, \sigma_n$, denoted $\pi[\sigma_1, \dots, \sigma_n]$, is equal to the unique permutation $\overline{\sigma}_1\overline{\sigma}_2\dots \overline{\sigma}_n$ such that $\sigma_i$ is order isomorphic to $\overline{\sigma}_i$ for all $i \in [n]$, and for any $i, j \in [n]$, all elements of $\overline{\sigma}_i$ are greater than all elements of $\overline{\sigma}_j$ if and only if $\pi_i > \pi_j$. We denote the inflation of $\pi$ by $n$ copies of a permutation $\sigma$ by $\pi[\sigma]$.

\begin{figure}[h!]
\centering
\includegraphics[scale=0.55]{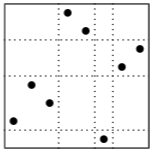}
\caption{The inflation of the permutation $2413$ by $132$, $21$, $1$, and $12$ is $24387156$.}
\label{fig:inflation}
\end{figure}

We can extend the definition of inflation to permutation classes. That is, for permutation classes $A$ and $B$,
\[
    A[B] = \{ \pi[\sigma] | \pi \in A, \sigma \in B \}.
\]

The following lemma allows us to inflate unsplittable classes to attain another unsplittable class. This lemma was first proved in \cite{unsplittable-separable}, but we will provide the proof here for completeness.

\begin{lemma} \label{lemma:inflate_unsplit}
    If permutation classes $A$ and $B$ are unsplittable, then $A[B]$ is also unsplittable.
\end{lemma}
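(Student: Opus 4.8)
The plan is to invoke the unsplittability characterization of Proposition~\ref{prop:split-equiv}. To prove $A[B]$ unsplittable, I would fix two arbitrary elements $\pi, \pi' \in A[B]$ and construct a single witness $\sigma \in A[B]$ with the property that every red-blue coloring of $\sigma$ forces the red part to contain $\pi$ or the blue part to contain $\pi'$. Since $\pi, \pi' \in A[B]$, I may write $\pi = a_1[b_1]$ and $\pi' = a_2[b_2]$ with $a_1, a_2 \in A$ and $b_1, b_2 \in B$. The idea is to build the witness by gluing together a witness for the pair $a_1, a_2$ in $A$ and a witness for the pair $b_1, b_2$ in $B$, exploiting that $A$ and $B$ are each unsplittable.

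Concretely, applying Proposition~\ref{prop:split-equiv} to $b_1, b_2 \in B$ produces $\gamma \in B$ such that every red-blue coloring of $\gamma$ has its red part containing $b_1$ or its blue part containing $b_2$; applying it to $a_1, a_2 \in A$ produces $\rho \in A$ with the analogous property. I then set $\sigma = \rho[\gamma]$, which belongs to $A[B]$ by definition, and claim it is the desired witness.

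To verify this, take any red-blue coloring of $\sigma = \rho[\gamma]$. The permutation $\sigma$ is partitioned into $|\rho|$ blocks, one per position of $\rho$, each an order-interval carrying a copy of $\gamma$ together with the coloring it inherits from $\sigma$. By the defining property of $\gamma$, within each block either the red elements contain $b_1$ or the blue elements contain $b_2$. This induces a \emph{meta-coloring} of the positions of $\rho$: color position $i$ meta-red if the red part of block $i$ contains $b_1$, and meta-blue otherwise, in which case its blue part contains $b_2$. By the defining property of $\rho$, the meta-red positions contain $a_1$ or the meta-blue positions contain $a_2$. In the former case the corresponding blocks form a copy of $a_1$, and choosing a red copy of $b_1$ inside each assembles, since distinct blocks compare according to $\rho$ and elements within a block according to $b_1$, into a red copy of $a_1[b_1] = \pi$; the latter case symmetrically yields a blue copy of $a_2[b_2] = \pi'$.

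The step requiring the most care is this last assembly: one must check that the selected elements really realize the inflation $a_1[b_1]$ rather than some other pattern. This is exactly where the interval structure of an inflation is used --- any two elements lying in different blocks compare according to the relative order of those blocks in $\rho$ (hence according to $a_1$ on the chosen blocks), while two elements in the same block compare according to $b_1$. Granting this, each individual step is routine; the real content of the argument is the two-level organization, resolving every block internally by the unsplittability of $B$ and then resolving the blocks among themselves by the unsplittability of $A$.
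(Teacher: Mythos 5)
Your two-level witness construction is exactly the paper's argument --- the paper phrases it contrapositively, producing $\sigma^+ \in A$ and $\tau^+ \in B$ and forming $\sigma^+[\tau^+]$, but your $\rho[\gamma]$, the meta-coloring of the positions of $\rho$, and the final assembly check are identical --- and that part is correct. However, there is one genuine gap at the very start: you write an arbitrary $\pi \in A[B]$ as $a_1[b_1]$, i.e.\ as an inflation of $a_1$ by copies of a \emph{single} element $b_1 \in B$. Since $A[B]$ is a permutation class it must be hereditary, and the set of such uniform inflations is not: for example with $A = \I$ and $B = \D$, the class $\I[\D]$ of layered permutations contains $213 = 12[21,1]$, which is not order isomorphic to any $\iota_k[\delta_\ell]$. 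So a general element of $A[B]$ has the form $a[\beta_1,\dots,\beta_n]$ with possibly \emph{distinct} $\beta_i \in B$, and your construction of $\gamma$ from the pair $b_1, b_2$ does not get off the ground as written.

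The repair is short, and it is precisely the step the paper includes that you omit: unsplittability of $B$ implies atomicity (Corollary~\ref{cor:atomicity}), so there exists a single $b_1 \in B$ containing all of $\beta_1, \dots, \beta_n$, whence $\pi = a[\beta_1,\dots,\beta_n] \le a[b_1]$. One may therefore replace $\pi$ by $a[b_1]$ --- a red copy of $a[b_1]$ contains a red copy of $\pi$ --- and likewise replace $\pi'$ by a uniform inflation $a_2[b_2]$. With that reduction inserted, your proof is complete and coincides with the paper's. It is worth flagging explicitly because this is the only place where atomicity, rather than just the characterization in Proposition~\ref{prop:split-equiv}, is actually needed.
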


\begin{proof}
    We will use the equivalent definition of splittability given in Proposition~\ref{prop:split-equiv}. Suppose $A[B]$ is splittable. Then, there exist $\pi, \pi' \in A[B]$ such that any $\sigma \in A[B]$ has a red-blue coloring with the red part avoiding $\pi$ and blue part avoiding $\pi'$. 
    
    Since $\pi, \pi' \in A[B]$, we can write $\pi = \sigma[\tau_1, \dots, \tau_n]$ and $\pi' = \sigma'[\tau'_1, \dots, \tau'_m]$ with $\sigma, \sigma' \in A$ and $\tau_i, \tau'_i \in B$. Since $B$ is unsplittable and therefore atomic by Corollary~\ref{cor:atomicity}, there exists some $\tau \in B$ that contains all $\tau_1, \dots, \tau_n$ and some $\tau' \in B$ that contains all $\tau'_1, \dots, \tau'_m$. Since $\sigma[\tau_1, \dots, \tau_n] \le \sigma[\tau]$, we may assume $\pi = \sigma[\tau]$, and similarly we may assume $\pi' = \sigma'[\tau']$.
    
    Since $A$ and $B$ are unsplittable, there exists $\sigma^+ \in A$ such that any red-blue coloring of $\sigma^+$ contains either a red $\sigma$ or a blue $\sigma'$. Similarly there exists $\tau^+ \in B$ such that any red-blue coloring of $\tau^+$ contains either a red $\tau$ or blue $\tau'$. 
    
    We claim that any red-blue coloring of $\sigma^+[\tau^+] \in A[B]$ contains either a red $\pi$ or a blue $\pi'$. To see this, consider a red-blue coloring of $\sigma^+[\tau^+]$. Using this coloring, we red-blue color $\sigma^+$ and $\tau^+$ as follows: color $\sigma^+_i$ red if the corresponding copy of $\tau^+$ contains a red $\tau$ and blue otherwise (in which case it must contain a blue $\tau'$). Then if there is a red $\sigma$ in the constructed coloring of $\sigma^+$, there is a red copy of $\pi$ in $\sigma^+[\tau^+]$. Otherwise there is a blue copy of $\pi'$ in the constructed coloring of $\sigma^+$, which implies that there is a blue copy of $\pi'$ in $\sigma^+[\tau^+]$, a contradiction. 
\end{proof}

\subsection{Composability}

The \emph{composition} of two permutations $\alpha$ and $\beta$ of length $n$ is the permutation $\sigma$ of length $n$, where $\sigma_i = \alpha_{\beta_i}$. We denote the composition of $\alpha$ and $\beta$ by $\alpha \circ \beta$.

The \emph{composition} of two classes $A$ and $B$ is  
\[
    A \circ B = \{ \alpha \circ \beta\ |\ \alpha \in A, \beta \in B, |\alpha| = |\beta| \}.
\]
We say that a permutation class $C$ is \emph{$k$-composable} if there exist $k$ proper subclasses $A_1, \dots, A_k \subsetneq C$ such that $C \subseteq A_1 \circ \dots \circ A_k$. If $C$ is $k$-composable for some $k$, we say that $C$ is \emph{composable}.

Note that unlike splittability, $k$-composability does not necessarily imply $2$-composability. Furthermore, unlike merging, the composition of two classes is not commutative.

\section{The Relation Between Composability and Splittability}

In \cite{composability}, Karpilovskij investigates whether many specific classes are composable. Among other classes, he shows that $\mathcal{L} = \I[\D]$, the class of layered permutations, is uncomposable. It follows from Lemma~\ref{lemma:inflate_unsplit} that $\mathcal{L}$ is unsplittable. Furthermore, letting $\mathcal{L}_k = \iota_k[\D]$ be the class of layered permutations with at most $k$ layers, he shows that $\mathcal{L}_2$ and $\mathcal{L}_3$ are examples of uncomposable yet splittable classes, and $\mathcal{L}_k$ with $k \ge 4$ are examples of composable and splittable classes. However, in the classes Karpilovskij studied, he did not find a composable and unsplittable class. Motivated by this, he asks if there is a permutation class that is composable and unsplittable. Here we demonstrate such a permutation class, answering his question.

\begin{theorem}
    The permutation class $C = \I[\D[\I]]$ is composable but also unsplittable.
\end{theorem}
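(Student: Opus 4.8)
The plan is to handle the two assertions separately: unsplittability will follow almost immediately from the inflation machinery already developed, while composability is the real content and calls for an explicit factorization.

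For unsplittability, I would first record that both $\I$ and $\D$ are unsplittable. Indeed, every proper subclass of $\I$ is hereditary inside the chain $\iota_1 \le \iota_2 \le \cdots$, hence equals $\{\iota_n : n \le N\}$ for some finite $N$; a merge of two increasing sequences of lengths at most $a$ and $b$ has length at most $a+b$, so the merge of any finitely many proper subclasses has bounded length and cannot contain $\iota_{a+b+1}$. By reversal symmetry the same holds for $\D$. I would then invoke Lemma~\ref{lemma:inflate_unsplit} twice: once to conclude that $\D[\I]$ is unsplittable (from $\D$ and $\I$ unsplittable), and a second time, using that $\I$ and $\D[\I]$ are both unsplittable, to conclude that $C = \I[\D[\I]]$ is unsplittable.

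For composability I would use the structural description that every $\sigma \in C$ is a direct sum $\sigma = \beta_1 \oplus \cdots \oplus \beta_t$ of blocks $\beta_i \in \D[\I]$, with $\beta_i$ of length $n_i$. The goal is a factorization $\sigma = F \circ G$ in which both factors lie in the \emph{single} proper subclass $\D[\I]$. I would take $G = \iota_{n_1} \ominus \iota_{n_2} \ominus \cdots \ominus \iota_{n_t}$, the skew sum of increasing runs whose sizes match the blocks, and $F = \beta_t \ominus \beta_{t-1} \ominus \cdots \ominus \beta_1$, the skew sum of the same blocks in reversed order. Since each $\beta_i$ is itself a skew sum of increasing runs and $\D[\I]$ is closed under $\ominus$, both $F$ and $G$ belong to $\D[\I]$. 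The verification that $F \circ G = \sigma$ is a routing computation rather than a hard estimate: $G$ carries the $i$-th position block increasingly onto the $i$-th value range from the top, and $F$, reading that range as its own position range, reproduces $\beta_i$ on $\sigma$'s $i$-th value range from the bottom. The two reversals of block order cancel, restoring the increasing (direct-sum) arrangement while each internal pattern $\beta_i$ is carried intact; tracking a position in block $i$ at offset $d$ through $G$ and then $F$ confirms $F(G(p)) = \sigma(p)$. This gives $C \subseteq \D[\I] \circ \D[\I]$, and since $132 = 1 \oplus 21 \in C$ but $132$ is not a skew sum of increasing runs (it begins with its smallest entry), we have $132 \in C \setminus \D[\I]$, so $\D[\I] \subsetneq C$ is proper; hence $C$ is $2$-composable.

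The main obstacle is locating this composability factorization and, in particular, making it work for blocks of unequal sizes. The naive homomorphism identity $\delta_t[\alpha] \circ \delta_t[\beta] = \iota_t[\alpha \circ \beta]$ applies only to equal-sized blocks, so the essential move is to recast the construction purely in terms of skew sums, where closure of $\D[\I]$ under $\ominus$ keeps both factors inside the same proper subclass no matter what the block sizes are. The guiding principle is that an increasing (direct-sum) coarse structure, which by itself would be as rigid as the uncomposable class $\I$, becomes accessible as a composition of two decreasing (skew-sum) coarse structures, and the inner increasing runs supplied by $\D[\I]$ are exactly what let both factors stay in a proper subclass. By contrast, unsplittability presents no genuine difficulty once Lemma~\ref{lemma:inflate_unsplit} is in hand.
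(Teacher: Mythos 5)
Your proof is correct, and while its overall skeleton matches the paper's --- unsplittability of $\I$ and $\D$ (they have no infinite proper subclasses) fed twice into Lemma~\ref{lemma:inflate_unsplit}, followed by an explicit two-factor composition exploiting the direct-sum block structure of $C$ --- your factorization for composability is a genuinely different decomposition. The paper reverses \emph{within} blocks: writing $\pi = \pi_1 \oplus \cdots \oplus \pi_n$ with $\pi_i \in \D[\I]$ of length $k_i$, it takes $\alpha = \pi_1^r \oplus \cdots \oplus \pi_n^r$ and $\beta = \delta_{k_1} \oplus \cdots \oplus \delta_{k_n}$, checks $\pi = \alpha \circ \beta$, and observes that both factors are layered, so $C \subseteq \mathcal{L} \circ \mathcal{L}$ with $\mathcal{L} = \I[\D] \subsetneq C$. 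You instead reverse the \emph{block order} while keeping each block's internal pattern intact: $F = \beta_t \ominus \cdots \ominus \beta_1$ and $G = \iota_{n_1} \ominus \cdots \ominus \iota_{n_t}$, giving $C \subseteq \D[\I] \circ \D[\I]$. Your routing computation checks out: $G$ carries position block $i$ increasingly onto the $i$-th value range from the top, where $F$ places a value-shifted copy of $\beta_i$ landing exactly on $\sigma$'s $i$-th value range from the bottom, so $F(G(p)) = \sigma(p)$ as claimed. The two factorizations are mirror images of one another --- yours is essentially the paper's conjugated by the reverse/complement symmetry, trading the layered class $\I[\D]$ for $\D[\I]$ and within-block reversal for between-block reversal --- and each buys the same conclusion: $2$-composability witnessed by a single proper subclass closed under the relevant sum operation. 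One small point in your favor is that you exhibit $132 \in C \setminus \D[\I]$ to certify properness of the subclass, whereas the paper asserts $\mathcal{L} \subsetneq C$ without naming a separating permutation; one small point against is that the equal-block-size ``obstacle'' you describe is no obstacle in the paper's version either, since its factors are likewise assembled by direct sums of unequal blocks.
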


\begin{proof}
    Note that $\I$ and $\D$ both consist of exactly one permutation of each length. Therefore there are no proper infinite subclasses, so $\I$ and $\D$ are unsplittable. It then follows from Lemma~\ref{lemma:inflate_unsplit} that $C$ is also unsplittable. 
    
    Note that the class of layered permutations, $\mathcal{L} = \I[\D]$, is a strict subclass of $C$. We claim that $C \subseteq \mathcal{L} \circ \mathcal{L}$. To see this, we can write any element $\pi \in C$ as $\pi_1 \oplus \dots \oplus \pi_n$, where each $\pi_i \in \D[\I]$. Then, define $\alpha = \pi_1^r \oplus \dots \oplus \pi_n^r$ and $\beta = \delta_{k_1} \oplus \dots \oplus \delta_{k_n}$, where $k_i = |\pi_i|$. Both $\alpha$ and $\beta$ are layered permutations, and $\pi = \alpha \circ \beta$ (see Figure~\ref{fig:idi-comp}). It follows that $C \subset \mathcal{L} \circ \mathcal{L}$, so $C$ is composable.
\end{proof}

\begin{figure}[h!]
\centering
\includegraphics[scale=0.55]{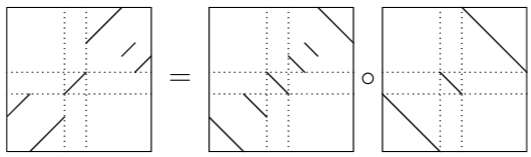}
\caption{The composability of $I[D[I]]$.}
\label{fig:idi-comp}
\end{figure}

Although composability does not imply splittability in general, we may give some conditions under which it does. In order to do this, we will need to prove a lemma about the maximum lengths of decreasing sequences in the composition of permutation classes, which was first proved in \cite{composability}.

\begin{lemma} \label{lemma:decreasing}
    If $A \subseteq \I_k$ and $B \subseteq \I_\ell$ for some $k, \ell \ge 1$, then $A \circ B \subseteq \I_{k\ell}$. Furthermore, if $A \subseteq \D_k$ and $B \subseteq \D_\ell$, then $A \circ B \subseteq \I_{k\ell}$.
\end{lemma}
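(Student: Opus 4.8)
The plan is to reduce the set-level inclusion to a statement about a single composed permutation. Since every element of $A \circ B$ has the form $\alpha \circ \beta$ with $\alpha \in A \subseteq \I_k$ and $\beta \in B \subseteq \I_\ell$ of equal length, it suffices to show that $\alpha \circ \beta \in \I_{k\ell}$, i.e., that $\sigma = \alpha \circ \beta$ has no decreasing subsequence of length $k\ell + 1$. I would argue by contradiction: from a long decreasing subsequence of $\sigma$ I will manufacture a decreasing subsequence of length $k+1$ inside $\alpha$, contradicting $\alpha \in \I_k$.

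Concretely, suppose positions $i_1 < \dots < i_m$ with $m = k\ell + 1$ satisfy $\sigma_{i_1} > \dots > \sigma_{i_m}$, that is $\alpha_{\beta_{i_1}} > \dots > \alpha_{\beta_{i_m}}$. I focus on the intermediate values $\beta_{i_1}, \dots, \beta_{i_m}$, which form a subsequence of $\beta$ and hence have longest decreasing subsequence at most $\ell$. The key combinatorial tool is the classical decomposition (Dilworth's theorem applied to the order on positions comparing both index and value, equivalently a greedy patience-sorting argument): a finite sequence whose longest decreasing subsequence has length at most $\ell$ can be partitioned into at most $\ell$ increasing subsequences. Applying this to $\beta_{i_1}, \dots, \beta_{i_m}$ and invoking the pigeonhole principle — since $m = k\ell + 1$ and there are at most $\ell$ parts, one part has size at least $\ceil{m/\ell} = k+1$ — I obtain indices $t_1 < \dots < t_{k+1}$ with $\beta_{i_{t_1}} < \dots < \beta_{i_{t_{k+1}}}$. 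Reading these as positions in $\alpha$, they are strictly increasing, while the corresponding values $\alpha_{\beta_{i_{t_s}}} = \sigma_{i_{t_s}}$ are strictly decreasing (being a sub-subsequence of the original decreasing subsequence). This exhibits a decreasing subsequence of $\alpha$ of length $k+1$, the desired contradiction.

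For the second statement I would run the mirror-image argument. Now $\alpha \in \D_k$ and $\beta \in \D_\ell$ bound longest \emph{increasing} subsequences, so I instead partition $\beta_{i_1}, \dots, \beta_{i_m}$ into at most $\ell$ decreasing subsequences (the dual decomposition, Mirsky's theorem). Pigeonhole again produces $k+1$ indices $t_1 < \dots < t_{k+1}$ along which the $\beta$-values decrease; read as positions in $\alpha$ these decrease, while the associated $\sigma$-values decrease as well, so reversing the order yields an increasing subsequence of $\alpha$ of length $k+1$, contradicting $\alpha \in \D_k$.

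The only point requiring care is the bookkeeping of three interleaved orderings — the positions $i_t$, the intermediate values $\beta_{i_t}$, and the final values $\alpha_{\beta_{i_t}} = \sigma_{i_t}$ — and verifying that monotonicity is transported correctly through the composition so that the $k+1$ chosen terms genuinely form a monotone pattern in $\alpha$. This transport of orderings, together with pinning down the pigeonhole constant as exactly $k+1$ rather than $k$, is where I expect the main (though purely combinatorial and routine) obstacle to lie.
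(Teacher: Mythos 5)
Your proof is correct and follows essentially the same route as the paper: both restrict attention to a longest decreasing subsequence of $\alpha \circ \beta$ and exploit the fact that monotonicity in the intermediate values $\beta_{i_t}$, viewed as positions in $\alpha$, is transported (with reversal) through the composition. The only difference is cosmetic: where you run Dilworth (resp.\ Mirsky) plus pigeonhole to extract the length-$(k+1)$ monotone pattern in $\alpha$, the paper bounds both the increasing and decreasing subsequence lengths of the restricted sequence and invokes the Erd\H{o}s--Szekeres theorem as a black box, an argument your decomposition-plus-pigeonhole step essentially reproves inline.
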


\begin{proof}
    First, we handle the case that $A \subseteq \I_k$ and $B \subseteq \I_\ell$. Take $\alpha \in A$ and $\beta \in B$, and let $\gamma = \alpha \circ \beta$. Consider a maximal decreasing sequence in $\gamma$, given by the indices $i_1, \dots, i_m$, and consider $\alpha'$, the restriction of $\alpha$ to the indices $\beta_{i_1}, \dots, \beta_{i_m}$, and $\beta'$, the restriction of $\beta$ to the indices $i_1, \dots, i_m$. Then, $\delta_m = \alpha' \circ \beta'$. Note that for any indices $u, v \in [m]$ such that $u < v$, $\beta'_u < \beta'_v$ if and only if $\alpha'_{\beta'_u} > \alpha'_{\beta'_v}$. That is, for any increasing sequence in $\beta'$, there must be a corresponding decreasing sequence in $\alpha'$. Then, the length of the maximal increasing sequence in $\beta'$ is at most $k$, and the length of the maximal decreasing sequence in $\beta'$ is at most $\ell$, giving that $m = |\beta'| \le k\ell$ by the Erd\H{o}s-Szekeres theorem \cite{es}.
    
    The second case, that $A \subseteq \D_k$ and $B \subseteq \D_\ell$, follows from an analogous argument.
\end{proof}

\begin{theorem}
    If an infinite composable permutation class $C$ avoids either an increasing permutation or a decreasing permutation, then it is splittable.
\end{theorem}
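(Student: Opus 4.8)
The plan is to reduce the statement to the structural fact, already recorded in the preliminaries, that $\I_m$ equals the merge of $m$ copies of $\I$ --- and, symmetrically, that $\D_m$ equals the merge of $m$ copies of $\D$ (this dual follows by reversing every permutation, since reversal interchanges $\I_m$ with $\D_m$ and commutes with merging). The hypothesis splits into two cases. If $C$ avoids a decreasing permutation $\delta_{N+1}$, then by definition $C \subseteq \I_N$; if instead $C$ avoids an increasing permutation $\iota_{N+1}$, then $C \subseteq \D_N$. These cases are entirely symmetric, so I would write out only the case $C \subseteq \I_N$.

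Assume then that $C \subseteq \I_N$. The first step is to locate $\I$ inside $C$: since $C$ is infinite yet every decreasing subsequence of a member of $C$ has length at most $N$, the class $C$ cannot contain $\D$, so by the Erd\H{o}s--Szekeres dichotomy for infinite classes \cite{es} it must contain $\I$; that is, $\I \subseteq C$. The second step is to upgrade this to a strict containment. As $\I$ contains exactly one permutation of each length, every proper subclass of $\I$ is finite, whence any composition of proper subclasses of $\I$ is finite and cannot cover the infinite class $\I$; thus $\I$ is uncomposable. Since $C$ is assumed composable, $C \neq \I$, and therefore $\I$ is a \emph{proper} subclass of $C$.

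It remains only to assemble the split. Combining $C \subseteq \I_N$ with the fact that $\I_N$ is the merge of $N$ copies of $\I$ yields $C \subseteq \I \odot \dots \odot \I$ with $N$ factors, and by the previous step each factor $\I$ is a proper subclass of $C$; this is exactly a witness to the splittability of $C$. The case $C \subseteq \D_N$ is identical with $\D$ replacing $\I$. The step I expect to require the most care is not the containment --- that is the soft Dilworth-type fact encapsulated in $\I_N = \I \odot \dots \odot \I$ --- but the \emph{properness} of the factors, and it is precisely here that the composability hypothesis is spent: without it one could have $C = \I$ (or $C = \D$), which is genuinely unsplittable because its only proper subclasses are finite. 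I note that this direct argument does not actually invoke \Cref{lemma:decreasing}; that lemma controls the decreasing subsequences of a composition and is the natural tool for the converse-flavored analysis, but the merge decomposition of $\I_N$ already suffices here.
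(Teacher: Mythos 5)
Your proof is correct, and it takes a genuinely different route from the paper's. The paper spends the composability hypothesis on the full composition witness $C \subseteq A_1 \circ \dots \circ A_k$: it replaces the factors $A_2, \dots, A_k$ by $\I_m$ (using $A_i \subseteq C \subseteq \I_m$; in the increasing-avoidance case this requires an extra parity argument showing $k$ must be odd, followed by pairing consecutive factors), collapses them to $\I_{m^{k-1}}$ via Lemma~\ref{lemma:decreasing}, and then interprets right-composition with an element of $\I_n$ as de-merging into at most $n$ pieces and re-merging, concluding that $C$ is contained in a merge of $m^{k-1}$ copies of the proper subclass $A_1$. You instead use composability only once and only negatively: since $\I$ and $\D$ have no infinite proper subclasses, any composition of proper subclasses of either is finite, so $\I$ and $\D$ are uncomposable; hence the composable class $C$ equals neither, and the Erd\H{o}s--Szekeres dichotomy upgrades $\I \subseteq C$ (resp.\ $\D \subseteq C$) to strict containment, after which $C \subseteq \I_N = \I \odot \dots \odot \I$ (a fact stated in the preliminaries, with the $\D_N$ dual following by reversal as you note) already witnesses the split. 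Your argument avoids Lemma~\ref{lemma:decreasing} and the odd/even analysis entirely, and in fact proves a slightly stronger statement: every infinite class avoiding a decreasing (resp.\ increasing) permutation, other than $\I$ itself (resp.\ $\D$), is splittable --- composability serves merely to exclude these two unsplittable extremes. What the paper's longer route buys is structural information tied to the composition witness: its split consists of copies of the actual first factor $A_1$, and it exercises Lemma~\ref{lemma:decreasing} and the de-merge interpretation of composing with $\I_n$, tools that recur elsewhere in the paper (e.g.\ the $2k$-uncomposability observation in Section 4). All the facts you invoke are either stated in the preliminaries or immediate, so there is no gap.
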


\begin{proof}
    Consider first the case that $C$ avoids a decreasing permutation of length $m+1$. Since it is composable, there exist $k$ subclasses $A_1, \dots, A_k \subsetneq C$ such that $C \subseteq A_1 \circ \dots \circ A_k$. Noting that $A_i \subseteq C \subseteq \I_m$, this gives that $C \subseteq A_1 \circ \I_m \circ \dots \circ \I_m$, where there are $k-1$ copies of $\I_m$. By Lemma~\ref{lemma:decreasing}, $\I_m \circ \dots \circ \I_m \subseteq \I_{m^{k-1}}$, so $C \subseteq A_1 \circ \I_{m^{k-1}}$. Composing a permutation $\pi$ with an element of $\I_n$ is equivalent to de-merging $\pi$ into up to $n$ subpermutations, then re-merging them (see Figure~\ref{fig:merge}). Using the fact that for any $\pi \in A_1$, $A_1$ contains all of $\pi$'s subpermutations, we see that $C \subseteq \underbrace{A_1 \odot \dots \odot A_1}_{m^{k-1}}$.

    \begin{figure}[h!]
    \centering
    \includegraphics[scale=0.4]{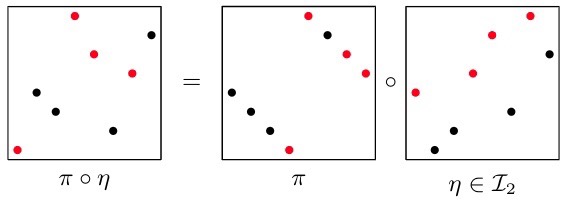}
    \caption{Composition with an element of $\I_2$. Composition of $\pi$ with $\eta \in \I_2$ de-merges $\pi$ into the red and blue subsequences and remerges them as shown in $\pi \circ \eta$.}
    \label{fig:idi-comp}
    \end{figure}
    
    The second case is that $C$ avoids an increasing permutation of length $m+1$. Let $C \subseteq A_1 \circ \dots \circ A_k$, where $A_i$ are proper subclasses of $C$. Note that if $k$ is even, then $A_1 \circ \dots \circ A_k$ does not contain $\delta_{m^k+1}$, which implies that $C$ is finite, a contradiction. Therefore, $k$ is odd. Next, pair up $A_{2i}$ with $A_{2i+1}$ for $i \in [\frac{k-1}{2}]$, and we see that $A_{2i} \circ A_{2i+1} \subseteq \I_{m^2}$ by Lemma~\ref{lemma:decreasing}. Then, $C \subseteq A_1 \circ \dots \circ A_k \subseteq A_1 \circ \I_{m^{k-1}}$, and once again we have that $C$ is the merge of $m^{k-1}$ copies of $A_1$. 
\end{proof}

\section{Further Questions}

As noted in \cite{composability}, and also as a corollary of Lemma~\ref{lemma:decreasing} and the fact that any infinite class $C$ must contain at least one of $\I$ and $\D$, any infinite class $C$ not containing $\D$ cannot be $2k$-composable for any $k \in \mathbb{N}$. However, we do not know of any general criteria that ensure that a class is not $3$-composable. In \cite{composability}, the techniques used to show uncomposability include counting arguments and quantifications of the similarity between certain composition of classes and better behaved classes, but besides the above stated condition for $2k$-uncomposability, we do not have methods that prove uncomposability for general classes. 

\begin{question}
    Give a condition under which a class $C$ is not $k$-composable for odd $k$.
\end{question}

Even more generally, we do not have general criteria under which a class is not composable.

\begin{question}
    Give a condition under which a class $C$ is not composable.
\end{question}

Recall that a class $C$ is splittable if there exist two proper subclasses $A, B \subsetneq C$ such that $C$ is a subset of $A \odot B$. We suggest a modified concept: \emph{exact-splittability}, which means that there exist two sets $A, B \subsetneq C$ such that $C = A \odot B$. Once we have an exact-split of $C$ into $A$ and $B$, we can repeat the process on $A$ and $B$, to decompose into even more subclasses. This process is guaranteed to terminate as long as $C$ is not the class of all permutations: given that $C$ avoids a permutation of length $m + 1$, it is not hard to show that if $C = A \odot B$ with $A, B \subsetneq C$, then $A$ and $B$ must both avoid a permutation of length $m$. Therefore, via repeated exact-splittings, we can eventually write $C = A_1 \odot \dots \odot A_k$, where each $A_i$ is not exact-splittable. We call such a set $\{ A_1, \dots, A_k \}$ of proper subclasses of $C$ an \emph{irreducible exact-splitting of $C$}.

\begin{question}
    Is the irreducible exact-splitting of an infinite permutation class $C$ unique?
\end{question}

Note that if we allow finite classes, the class $C$ of all permutations of length up to $6$ excluding $\iota_6$ and $\delta_6$ is not uniquely decomposed into an irreducible exact-splitting. If we let $g(\Pi)$ be the set of permutations $\sigma$ such that $\sigma \le \pi$ for some $\pi \in \Pi$, where $\Pi$ is a set of permutations, then $C = g(1) \odot g(1) \odot g(12) \odot g(21) = g(1) \odot g(1) \odot g(1) \odot g(132, 213, 231, 312)$, which are two different irreducible exact-splittings.

\section{Acknowledgements}

This research was conducted at the University of Minnesota Duluth REU and was supported by NSF / DMS grant 1659047 and NSA grant H98230-18-1-0010.
I would like to thank Joe Gallian, who suggested the problem, and Colin Defant and Caleb Ji for their suggestions on this paper.

\end{document}